\title{Cyclicity and iterated logarithms in the Dirichlet space}
\author{Alexandru Aleman}
\address{Department of Mathematics\\ Lund University}
\email{alexandru.aleman@math.lu.se}
\author{Stefan Richter}
\address{Department of Mathematics\\ The University of Tennessee\\ Knoxville, TN 37996}
\email{srichter@utk.edu}
\date{\today}
\thanks{ }
\subjclass[2010]{Primary: 47A16; Secondary 30H15 }
\keywords{Dirichlet space, cyclic vectors}
\newcommand{\la}{\langle}
\newcommand{\ra}{\rangle}
\newcommand{\MH}{\Mult(\HH)}
\numberwithin{equation}{section}
\begin{document}

\date{\today}

\bibliographystyle{plain}

\begin{abstract} Let $D(\mu)$ denote a superharmonically weighted Dirichlet space on the unit disc $\D$. We show that outer functions  $f\in D(\mu)$ are cyclic in $D(\mu)$, whenever $\log f$ belongs to the Pick-Smirnov class $N^+(D(\mu))$. If $f$ has $H^\infty$-norm less than or equal to 1, then cyclicity can also be checked via iterated logarithms. For example, we show that such outer functions $f$ are cyclic, whenever $\log(1+ \log(1/f))\in N^+(D(\mu))$. This condition can be checked by verifying that $\log(1+ \log(1/f))\in D(\mu)$.

If $f$ satisfies a mild extra condition, then the conditions also become necessary for cyclicity.
\end{abstract}
\maketitle

\section{Introduction}
Let $\D$ be the open unit disc in the complex plane $\C$, and let $\HH\subseteq \Hol(\D)$ be a reproduding kernel Hilbert space with multiplier algebra
$$\Mult(\HH)=\{\varphi: \D \to \C: \varphi f \in \HH \ \forall f\in \HH\}.$$ Multipliers define bounded linear operators on $\HH$ by $M_\varphi f= \varphi f$, and we write $$\|\varphi\|_{\MH}=\|M_\varphi\|_{\HB(\HH)}$$ for the multiplier norm of $\varphi \in \MH$.

If $\HH=H^2$ is the Hardy space of the unit disc, then $\Mult(H^2)=H^\infty$, the bounded analytic functions on $\D$. In this paper we will  be interested in the superharmonically weighted Dirichlet spaces. Let  $\mu $ be a finite nonnegative Borel measure in the closed unit disc, and write
$$U_\mu(z)= \int_{|w|<1}\log\left|\frac{1-\overline{w}z}{w-z}\right|^2 \frac{d\mu(w)}{1-|w|^2} + \int_{|w|=1} \frac{1-|z|^2}{|1-z\overline{w}|^2} d\mu(w), |z|<1.$$
The space $D(\mu)$ is defined by
$$D(\mu)=\{f\in H^2: \int_{|z|<1} |f'(z)|^2 U_\mu(z) dA(z)<\infty\},$$
and a norm on $D(\mu)$ is given by $\|f\|^2_\mu= \|f\|^2_{H^2} + \int_{|z|<1} |f'(z)|^2 U_\mu(z) \frac{dA(z)}{\pi}$. Here we have written $dA(z)=dxdy$ for 2-dimensional Lebesgue measure on $\D$.

 Thus, if $\mu=0$, then $D(\mu)=H^2$ and if $d\mu= \frac{|dz|}{2\pi}$ is normalized Lebesgue measure on $\T=\partial \D$, then $U_\mu=1$ and $D(\mu)=D$, the classical Dirichlet space. Furthermore,  the map $\mu\to U_\mu$ is a bijective correspondence between the finite nonnegative Borel measures $\mu$ on $\overline{\D}$ and the nonnegative superharmonic functions on $\D$, see  \cite{AlexandruHab}, page 75. In particular, one checks that for $0\le \alpha\le 1$  the standard weighted Dirichlet spaces $D_\alpha=\{f\in H^2: \int_{|z|<1} |f'(z)|^2 (1-|z|^2)^{1-\alpha} dA(z)<\infty\}$ are included in the collection of superharmonically weighted Dirichlet spaces. $D(\mu)$-spaces spaces have been studied by many authors. For some of their basic properties we refer the reader to \cite{RiSuMMJ}, \cite{AlexandruHab}, and \cite{ElFallahKellayMashreghiRansfordPrimer}.  It is trivial to see that all constant functions are contained in $D(\mu)$, hence $\Mult(D(\mu))\subseteq D(\mu)$.

Let $\HH$ be reproducing kernel Hilbert  space on $\D$ with reproducing kernel $k_w(z)$. $k$ is called a normalized complete Nevanlinna-Pick kernel, if there is an auxiliary Hilbert space $\HK$ and a function $b:\D\to \HK$ such that $b(0)=0$ and $k_w(z)= \frac{1}{1-\la b(z),b(w)\ra_{\HK}}.$ Spaces with normalized complete Nevanlinna-Pick kernel have been investigated, because there are many interesting examples, and because many results that are true for $H^2$ are true for such spaces. Shimorin showed in \cite{ShimorinCNP} that all $D(\mu)$-spaces have a normalized complete Nevanlinna-Pick kernel.

If $g\in \HH$, then we write $[g]$ for the closure in $\HH$ of $\{\varphi g: \varphi\in \MH\}$. A function is called cyclic for $\HH$, if $[g]=\HH$. The Pick-Smirnov class of $\HH$ is defined by
$$N^+(\HH)=\{\frac{u}{v} : u,v\in \MH, v \text{ is cyclic} \}.$$ Of course, if $\HH=H^2$, then $N^+(\HH)$ coincides with the classical Smirnov class.
In \cite{AHMRsccps} it was shown that if $k$ is a normalized complete Nevanlinna-Pick kernel, then $\HH\subseteq N^+(\HH)$. More precisely, it was shown that if $f\in \HH$ with $\|f\|\le 1$, then there
are  $\varphi, \psi\in \MH$  with $\psi(0)=0$ and $\|\psi h\|^2+\|\varphi h\|^2\le \|h\|^2$ for all $h\in \HH$ and \begin{align}\label{equ:Smirnovratio}f=\frac{\varphi}{1-\psi}.\end{align} One then shows that $1-\psi$ is cyclic, see Lemma 2.3 of \cite{AHMRsccps}. It also follows that $[f]=[\varphi]$ and many questions about cyclic functions in $\HH$ can be reduced to questions about cyclic functions in $\MH$, see Lemma 6.5 of \cite{APRSS1}.

Hence we have \begin{align}\label{equ:N^+} D(\mu)\subseteq N^+(D(\mu)).\end{align}

In the paper \cite{APRSS2} the authors gave some conditions for cyclicity of functions in radially weighted Besov spaces on the unit ball of $\C^d$ (including the Dirichlet space $D$ and the Drury-Arveson space). All of those results hold for all  spaces $D(\mu)$, even though the weight may not be radial. In the current paper we will show that if $\HH=D(\mu)$, then the validity of the results from \cite{APRSS2} can be extended to hold for a larger class of functions.

The cyclic functions in $H^2$ are the outer functions. From this and the contractive inclusion $D(\mu)\subseteq H^2$ it follows that cyclic functions in $D(\mu)$ have to be outer. Furthermore, for spaces $\HH$ with normalized complete Nevanlinna-Pick kernel it is an easy observation that $g\in \HH$ is cyclic, if and only if $1/g \in N^+(\HH)$, see e.g. \cite{APRSS2}, Theorem 3.1. Since $N^+(\HH)$ is an algebra, it follows that if $g^{-\frac{1}{n}}\in N^+(\HH)$ for some $n\in \N$, then $g$ must be cyclic. This suggests to consider logarithms.
\begin{thm}\label{thm:log} Let $\mu$ be a nonnegative finite Borel measure on $\T$, and let  $g\in D(\mu)$ be outer.

 If   $\log g\in N^+(D(\mu))$, then $g$ is cyclic in $D(\mu)$.
\end{thm}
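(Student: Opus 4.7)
The goal is to locate an invertible multiplier of $D(\mu)$ inside the invariant subspace $[g]$; cyclicity of this multiplier will then force $[g]=D(\mu)$. Write $\log g=u/v$ with $u,v\in\Mult(D(\mu))$ and $v$ cyclic, as provided by the $N^+$-hypothesis.

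Since $\Mult(D(\mu))$ is a Banach algebra under the submultiplicative multiplier norm, the series $\sum_{k\ge 0}u^k/k!$ converges in multiplier norm to an element $e^u\in\Mult(D(\mu))$; likewise $e^{-u}\in\Mult(D(\mu))$. Hence $e^u$ is an invertible multiplier, and in particular cyclic in $D(\mu)$. Because $g$ is outer, $\log g$ is a well-defined holomorphic function on $\D$, and from $u=v\log g$ one gets the pointwise identity
\[
e^u \;=\; g\cdot \exp\bigl((v-1)\log g\bigr).
\]
The problem is therefore reduced to showing $e^u\in[g]$: once established, $[g]\supseteq[e^u]=D(\mu)$ and $g$ is cyclic.

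To prove $e^u\in[g]$, I would use cyclicity of $v$ to pick multipliers $\varphi_n$ with $\varphi_n v\to 1$ in $D(\mu)$, and consider the invertible multipliers $F_n:=e^{\varphi_n u}\in\Mult(D(\mu))$. A direct manipulation gives $F_n=g\cdot\exp\bigl((\varphi_n v-1)\log g\bigr)$, and $F_n\to e^u$ pointwise on $\D$. The plan is then to argue that $F_n\in[g]$ for each $n$ large enough, and that $F_n\to e^u$ in the norm of $D(\mu)$, which together imply $e^u\in[g]$.

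\textbf{Main obstacle.} The delicate point is that the factor $\exp\bigl((\varphi_n v-1)\log g\bigr)$ is not in general a multiplier (since $\log g\notin H^\infty$), so $F_n\in[g]$ cannot be read off directly from the factorisation above; moreover $g$ is not assumed to be a multiplier, so multiplication by $g$ is not continuous on $D(\mu)$ and one cannot simply pass from $\varphi_n v\to 1$ in $D(\mu)$ to $\varphi_n vg\to g$. I expect the argument to estimate the harmonically weighted Dirichlet integral $\int_\D|F_n'-(gm_n)'|^2 P_\mu\,dA/\pi$ for well-chosen multiplier approximants $m_n$ directly, using $u=v\log g$, submultiplicativity of the multiplier norm, and the contractive Smirnov representation \eqref{equ:Smirnovratio} applied to suitable auxiliary functions built from $u$, $v$ and $g$. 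This is the step where the specific harmonically weighted structure of $D(\mu)$ enters beyond the general complete Pick framework used in \cite{APRSS2}.
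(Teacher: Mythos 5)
Your setup is fine as far as it goes: writing $\log g=u/v$ with $u,v$ multipliers and $v$ cyclic, observing that $e^{\pm u}\in\Mult(D(\mu))$ because the multiplier algebra is a Banach algebra, and noting that an invertible multiplier is cyclic, are all correct. But the proof stops exactly where the theorem's content begins. The entire difficulty is the assertion $e^u\in[g]$ (equivalently $F_n\in[g]$), i.e.\ that $g\cdot\exp\bigl((v-1)\log g\bigr)$ lies in $[g]$, and you explicitly defer this to an unspecified ``direct estimate.'' No such estimate is in sight: the factor $\exp\bigl((v-1)\log g\bigr)=g^{\,v-1}$ has a multiplier-valued exponent, so there is no pointwise comparison $|e^u|\lesssim|g|$ that would let you invoke Lemma~\ref{lem:f<g}, and passing to $F_n=e^{\varphi_n u}$ does not reduce the difficulty, since $\exp\bigl((\varphi_n v-1)\log g\bigr)$ is exactly as intractable. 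There is also a concrete error in the limiting step: since $\varphi_n v\to 1$ pointwise, $\varphi_n u=(\varphi_n v)\log g\to\log g$ pointwise, so $F_n\to e^{\log g}=g$, not $e^u$. (This error is in some sense moot, because if you could show a single $F_n\in[g]$ you would be done instantly, $F_n$ being an invertible multiplier; but it signals that the approximation scheme is not doing what you intend.)

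For comparison, the paper takes a different and genuinely workable route. It first reduces to the case where $g$ is itself a contractive multiplier, using the Smirnov representation \eqref{equ:Smirnovratio} $g=u/(1-v)$ together with $[g]=[u]$ and the fact that $\log u\in N^+(D(\mu))$. Then, writing $g=e^{\varphi/\psi}$ with $\psi$ a cyclic multiplier, it considers $g_\alpha=\psi^2 g^{\alpha}$ for \emph{real scalar} exponents $\alpha\in(0,1]$. Real exponents are what make the argument run: one shows $g_\alpha\in D(\mu)$ by a direct derivative estimate, and then the division theorem $[\psi^2 g^{\alpha}]=[\psi^2 g]\subseteq[g]$ (Theorem~\ref{thm:ghalpha}, whose proof is the real technical core, via local Dirichlet integrals and the cut-off functions $h\wedge nh^2$). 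Letting $\alpha\to 0$ gives $g_\alpha\to\psi^2$ weakly, hence $\psi^2\in[g]$, and cyclicity of $\psi$ finishes. Your proposal never engages with any mechanism of this kind for placing a cyclic vector inside $[g]$, so as it stands it is a reduction of the theorem to an equally hard (arguably harder) unproven claim.
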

Note that the theorem becomes false, if the requirement that $g$ be an outer function is removed. Indeed, if  $\mu=0$, then $D(\mu)=H^2$, and if $g$ is a singular inner function, then $\log g \in N^+(H^2)$, but $g$ is not cyclic in $H^2$.

We will prove the Theorem in Section \ref{sec:MainThm}. Thus,  inclusion (\ref{equ:N^+}) implies that for outer functions a sufficient condition for cyclicity is given by $\log g\in D(\mu)$. That condition was known,  see \cite{AlexandruHab}. However, for bounded outer functions the condition can be substantially improved.

Let $\C_+=\{z\in \C: \mathrm{Re} z>0\}$ and note that $\log(1+z)$ has positive real part on $\C_+$. Thus, for  $n\in \N_0$ we can inductively  define analytic functions on $\C_+$ by
\begin{align*} G_0(z)&=z,\\
G_{n+1}(z)&=\log(1+G_n(z)) \ \ \text{(principal branches)}.
\end{align*}
For radially weighted Besov spaces $\HH$ in the unit ball of $\C^d$ it was shown in \cite{APRSS2} that whenever $f$ has positive real part and  $G_n\circ f\in N^+(\HH)$, then  $f\in N^+(\HH)$. The same holds true for $\HH=D(\mu)$ and it can be applied with $f=\log \frac{1}{g}$ whenever $\|g\|_\infty \le 1$.
\begin{cor} \label{cor:N+Dmu}Let $g\in D(\mu)\cap H^\infty $ be an outer function such that $\|g\|_\infty \le 1$.

If there is $n\in \N$ such that $G_n\circ (\log\frac{1}{g})\in N^+(D(\mu))$, then $g$ is cyclic in $D(\mu)$.
\end{cor}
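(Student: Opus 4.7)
The plan is to derive Corollary~\ref{cor:N+Dmu} as an almost formal consequence of Theorem~\ref{thm:log}, using the iterated-logarithm implication for $N^+(D(\mu))$ that is announced in the paragraph immediately preceding the corollary.

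First, I would dispose of the degenerate case. If $|g(z_0)|=1$ for some $z_0\in\D$, then the maximum modulus principle forces $g$ to be a unimodular constant, and such a $g$ is trivially cyclic in $D(\mu)$. So I may assume $|g(z)|<1$ on all of $\D$. Since $g$ is outer it is nowhere zero on $\D$, so $f:=\log(1/g)$ is a well-defined analytic function on $\D$ with $\mathrm{Re}\, f(z)=\log(1/|g(z)|)>0$. In particular $f$ takes values in $\C_+$, so the iterated logarithms $G_n\circ f$ are meaningful and coincide with the usual compositions obtained by $n$-fold branch selection.

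Next, I would invoke the iterated-logarithm stability statement that the paper asserts for $D(\mu)$: whenever $h:\D\to\C_+$ is analytic and $G_n\circ h\in N^+(D(\mu))$ for some $n\in\N$, one has $h\in N^+(D(\mu))$. Applied with $h=f$ and the $n$ from the hypothesis, this yields $f\in N^+(D(\mu))$. Because $N^+(D(\mu))$ is closed under negation (if $u,v\in\MH$ with $v$ cyclic, then $-u/v=(-u)/v$ still has this form), we obtain $\log g=-f\in N^+(D(\mu))$. Theorem~\ref{thm:log} now applies to the outer function $g\in D(\mu)$ and concludes that $g$ is cyclic in $D(\mu)$.

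The only non-formal step in this chain is the iterated-logarithm implication itself. In \cite{APRSS2} the corresponding statement is proved for radially weighted Besov spaces on $\Bd$, whereas the Poisson weight $P_\mu$ need not be radial; the main obstacle, therefore, is verifying that the argument of \cite{APRSS2} adapts to the $D(\mu)$ setting (the paper says it does). By induction on $n$ it is enough to handle the case $n=1$, i.e.\ to show that $\log(1+f)\in N^+(D(\mu))\Rightarrow f\in N^+(D(\mu))$ for $f$ with positive real part; this is the single technical input that one expects to need. Once that one-step implication is in hand, Corollary~\ref{cor:N+Dmu} is a one-line deduction from Theorem~\ref{thm:log} via the reduction above.
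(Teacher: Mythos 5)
Your reduction of Corollary \ref{cor:N+Dmu} to the one-step implication ``$\log(1+F)\in N^+(D(\mu))$ and $\mathrm{Re}\,F\ge 0$ imply $F\in N^+(D(\mu))$'' is exactly the route the paper takes: since $G_k\circ \log\frac1g=\log\bigl(1+G_{k-1}(\log\frac1g)\bigr)$ and each $G_{k-1}(\log\frac1g)$ has nonnegative real part, downward induction gives $\log\frac1g\in N^+(D(\mu))$, and Theorem \ref{thm:log} finishes. Your closure-under-negation remark and your disposal of the case where $g$ is a unimodular constant are fine (and the latter is not even needed, since the paper's lemma only requires $\mathrm{Re}\,F\ge 0$). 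The problem is that the one-step implication is not a citable black box here: it is precisely the Lemma that Section \ref{sec:iterations} of the paper exists to prove, and the sentence in the introduction that you lean on (``the same holds true for $\HH=D(\mu)$'') is an announcement of that Lemma, not an independent source. As written, your argument is circular at its only substantive point; everything you supply is the formal bookkeeping, and the technical input you defer is the entire content of the corollary.

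Moreover, that missing step is not a routine transcription of \cite{APRSS2}: the paper's proof of the Lemma runs through its own Theorem \ref{thm:ghalpha}. Concretely, one writes $\log(1+F)=\varphi/\psi$ with $\varphi,\psi$ multipliers and $\psi$ cyclic, sets $h=\psi^2e^{-\varphi/\psi}=\psi^2/(1+F)$, and checks $h\in D(\mu)$ because $h'=(2\psi\psi'-\varphi'\psi+\varphi\psi')/(1+F)$ and $|1+F|\ge 1$. One then deforms $g_\alpha=\psi^2e^{-\alpha\varphi/\psi}$ for $0<\alpha\le 1$: Theorem \ref{thm:ghalpha}, applied to $\psi^2$ and the bounded outer function $e^{-\varphi/\psi}$, gives $g_\alpha\in[h]$ for all $\alpha>0$, and since $g_\alpha\to\psi^2$ weakly one gets $\psi^2\in[h]$, hence $h$ is cyclic. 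Writing $h=u/v$ with $u,v$ cyclic multipliers then yields $F=\psi^2/h-1=(\psi^2v-u)/u\in N^+(D(\mu))$. Without this argument (or an equivalent one), your proposal does not constitute a proof of the corollary.
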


We will prove this Corollary in Section \ref{sec:iterations}. Thus, now the inclusion (\ref{equ:N^+}) implies the following Corollary.

\begin{cor}\label{cor:Dmu} Let $g\in D(\mu)\cap H^\infty $ be an outer function such that $\|g\|_\infty \le 1$.

If there is $n\in \N$ such that $G_n\circ (\log\frac{1}{g})\in D(\mu)$, then $g$ is cyclic in $D(\mu)$.
\end{cor}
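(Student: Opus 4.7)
The plan is to derive this statement directly from Corollary \ref{cor:N+Dmu} together with the Smirnov-class inclusion (\ref{equ:N^+}); essentially no new work is required, so this corollary is presented as an immediate consequence of what has already been proved.

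First I would verify that the iterated composition $G_n\circ \log(1/g)$ is well defined. Because $\|g\|_\infty\le 1$, one has $\mathrm{Re}\,\log(1/g(z)) = -\log|g(z)| \ge 0$ on $\D$. If $g$ is a unimodular constant then $g$ is trivially cyclic in $D(\mu)$ and there is nothing to prove, so I may assume $g$ is nonconstant; the maximum modulus principle then gives $|g(z)|<1$ throughout $\D$, placing $\log(1/g)$ in $\C_+$ and making the inductive definition $G_{n+1}\circ h = \log(1+G_n\circ h)$ legitimate for $h=\log(1/g)$.

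Next I would apply the inclusion (\ref{equ:N^+}): since $D(\mu)\subseteq N^+(D(\mu))$, the hypothesis $G_n\circ \log(1/g)\in D(\mu)$ immediately upgrades to $G_n\circ \log(1/g)\in N^+(D(\mu))$. Corollary \ref{cor:N+Dmu} then yields cyclicity of $g$ in $D(\mu)$, completing the deduction.

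The substantive work in this chain sits upstream: Corollary \ref{cor:N+Dmu} reduces membership at the $n$-th iterated-logarithm level to membership of $\log g$ in $N^+(D(\mu))$, Theorem \ref{thm:log} in turn converts that into cyclicity of the outer function $g$, and the inclusion (\ref{equ:N^+}) rests on the Nevanlinna-Pick factorization (\ref{equ:Smirnovratio}) from \cite{AHMRsccps}. Once those three ingredients are in place I expect no real obstacle here, the only point deserving a line of comment being the preliminary check that $\log(1/g)$ takes values in $\C_+$.
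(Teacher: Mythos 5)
Your proposal is correct and coincides with the paper's own (one-line) argument: the inclusion $D(\mu)\subseteq N^+(D(\mu))$ from (\ref{equ:N^+}) upgrades the hypothesis to $G_n\circ\log\frac{1}{g}\in N^+(D(\mu))$, and Corollary \ref{cor:N+Dmu} then gives cyclicity. The preliminary check that $\log\frac{1}{g}$ maps into $\C_+$ (after discarding the trivial unimodular-constant case) is a reasonable extra remark but not a substantive difference.
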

We make several remarks. First, if $g$ satisfies the hypothesis of the Corollary, then $1+\log \frac{1}{g}$ has positive real part and hence is contained in $H^p$ for all $0<p<1$, see \cite{Garnett}, Chapter III, Theorem 2.4. Furthermore, for all $\alpha >0$ we have $$|G_1(\log \frac{1}{g(z)})|\le \log |1+\log \frac{1}{g(z)}| +\pi/2\le \frac{1}{\alpha}|1+\log \frac{1}{g(z)}|^{\alpha} + \pi/2.$$ Hence $G_1 \circ (\log\frac{1}{g})\in H^p$ for all $p<\infty$. But then $G_n\circ (\log\frac{1}{g})\in H^p$ for all $n\in \N$ and $p<\infty$. In particular, $G_n\circ (\log\frac{1}{g})\in H^2$.

Secondly, since $|G'_{n+1}|= \frac{|G'_n|}{|1+G_n|} \le |G'_n|$ one checks that the hypothesis of Corollary \ref{cor:Dmu} is more easily satisfied the larger $n$ is. For example, if $f$ is univalent and maps $\D$ to $\D\setminus \{0\}$, then $G_n\circ \log \frac{1}{f}$ is univalent, hence it will be in the Dirichlet space $D$, if and only if its range has finite area.
That  will definitely be the case if $G_n(\C_+)$ has finite area. The change of variables $x+iy=w=G_1(z)$ shows that $$\int_{\C_+}|G_2'(z)|^2 dA(z)=\int_0^\infty \int_{-\pi/2}^{\pi/2} \frac{1}{(1+x)^2+y^2}dxdy<\infty.$$ Hence,  Corollary \ref{cor:Dmu} recovers the known result that  any univalent function that maps the unit disc into $\D\setminus \{0\}$ must be cyclic in the Dirichlet space $D$, see \cite{RiSuJOT}.

Since the hypothesis of Corollary \ref{cor:Dmu} is more easily satisfied for larger $n$ one may wonder whether the same is true for  the hypothesis of Corollary \ref{cor:N+Dmu}. In Sections 4 and 5 we will show that for functions that satisfy a mild integrability condition the above conditions are equivalent.

\begin{thm}\label{thm:equivalences}
Let $g\in D(\mu) \cap H^\infty(\D)$ be an outer function with $\|g\|_\infty\le 1$ and such that either $g$ has bounded argument or $$\int_{\D}|g'(z)|^2 |G_n(\frac{1}{1-|z|^2} )|^2U_\mu(z)dA(z) <\infty$$ for some $n\in \N$.

 Then the following are equivalent
\begin{enumerate}
\item $g $ is cyclic in $D(\mu)$,
\item $ \log g\in N^+(D(\mu))$,
\item $\log(1+\log \frac{1}{g}) \in N^+(D(\mu))$,
\item there is $k \in \N$ such that $G_k\circ \log \frac{1}{g} \in N^+(D(\mu))$,
\item for all $k \in \N$ we have $G_k\circ \log \frac{1}{g} \in N^+(D(\mu))$.
\end{enumerate}
\end{thm}
 The Brown-Shields conjecture asks, whether an outer function $g$ is cyclic in the Dirichlet space $D$, whenever the radial zero set $Z(g)$ has logarithmic capacity 0, $Z(g)=\{e^{it}: \lim_{r\to 1^-}|g(re^{it})|=0\}$, see \cite{BrownShields}.

It is unclear that this conjecture can be resolved directly by use of Corollary \ref{cor:Dmu}. But if $g\in D$ is outer with $\|g\|_\infty\le 1$ and such that $Z(g)$ has logarithmic capacity 0, then one might try to find a function $f\in D$ with $Z(f)=Z(g)$ such that it is known that $f$ is cyclic, and such  that $f (G_n\circ (\log\frac{1}{g}))\in D$. Then $G_n\circ (\log\frac{1}{g})\in N^+(D)$ and $g$ would be cyclic by Corollary \ref{cor:N+Dmu}.

We showed that membership in the Pick-Smirnov class of the iterated logarithms $G_n(\log \frac{1}{g})$ determines cyclicity of $g$ for many functions $g\in D(\mu)$. It is thus interesting that $G_n(\log \frac{1}{g})$ is in the Pick-Nevanlinna class $N(D(\mu))$, whenever $g\in D(\mu)$ has no zeros in $\D$. Here
$$N(D(\mu))=\{\frac{\varphi}{\psi}: \varphi, \psi\in \mathrm{Mult}(D(\mu)), \psi(z)\ne 0 \text{ for all }z\in \D\}.$$
\begin{thm} \label{thm:Nevanlinna} If $g\in D(\mu)$ has no zeros in $\D$, then  $\log g \in N(D(\mu))$. Furthermore, if additionally $\|g\|_\infty \le 1$, then $G_n(\log \frac{1}{g}) \in N(D(\mu))$ for all $n\in \N$.
\end{thm}
The short proof will be given in Section \ref{sec:NevanlinnaClass}.

We would like to thank the referee for a careful reading of our original manuscript. Their insightful remarks inspired us to change the order of the  presentation of some theorems and their proofs. The changes  should make the paper more easily accessible.
We also thank Michael Hartz whose observation simplified our original proof of Theorem \ref{thm:log}. In fact, it made us realize that the results of this paper hold in the generality of all $D(\mu)$ spaces with $\mu$ supported in $\overline{\D}$, rather than only for $\mu$ supported in ${\partial \D}$.

\section{The proof of Theorem \ref{thm:log}} \label{sec:MainThm}

The only technical part of the proof of Theorem \ref{thm:log} is contained in the following Theorem.

\begin{thm}\label{thm:ghalpha}    If $h\in D(\mu)$ is an outer function such that $h^2\in D(\mu)$, then $h\in [h^2]$.
 \end{thm}
If $\mu$ is supported in $\partial \D$, then this a special case of Theorem 4.3 of \cite{RiSuJOT}. We will use the same approach as in \cite{RiSuJOT}, the key estimates will follow from results of \cite{AlexandruHab}.

We start by recalling the definition of the local Dirichlet integral. Let $f\in H^2$ and $z\in \overline{\D}$. If $z\in \D$ or if $z\in \partial \D$ and $f$ has nontangential limit $f(z)$ at $z$, then define
\begin{align}\label{equ:localDiri} D_z(f)=\int_{|w|=1} \left|\frac{f(w)-f(z)}{w-z}\right|^2 \frac{|dw|}{2\pi}.\end{align} Otherwise, set   $D_z(f)=\infty$.

\begin{lem}\label{lem:localDiriPotential} (=Theorem IV.1.9 of \cite{AlexandruHab}) If $f\in H^2$ and if $\mu$ is a nonnegative finite Borel measure in $\overline{\D}$, then
$$\int_{|z|\le 1} D_z(f) d\mu(z)= \int_{|w|<1} |f'(w)|^2 U_\mu(w) \frac{dA(w)}{\pi}.$$
\end{lem}
If $f$ and $g$ are outer functions in $H^2$, then we use $f\wedge g$ to denote the outer function such that $$|f\wedge g(e^{it})|= \min\{|f(e^{it})|, |g(e^{it})|\} \ a.e. $$ Similarly we define $f \vee g$ to be the outer function such that $$|f\vee g(e^{it})|= \max\{|f(e^{it})|, |g(e^{it})|\} \ a.e. $$

\begin{lem} \label{lem:wedge_vee} If $f\in H^2$ is outer and $z\in \overline{\D}$, then $D_z(f\wedge 1)\le D_z(f)$ and $D_z(f\vee 1)\le D_z(f)$.
 \end{lem}
If $|z|=1$, then this is a special case of Lemma 2.2 of \cite{RiSuJOT}. If $|z|<1$, then this is Proposition IV.3.3 (ii) and (iii) of \cite{AlexandruHab}. It can also be found in \cite{AlexandruProc1992}.

\begin{lem} \label{lem:fandfsquared} If $f\in H^2$ is outer and $z\in \overline{\D}$, then $D_z(f\wedge f^2) \le 10 D_z(f)$.
\end{lem}
\begin{proof} If $z\in \partial \D$, then this is Theorem 7.5.4 of \cite{ElFallahKellayMashreghiRansfordPrimer} with a constant 4 instead of 10. We will now verify the case $|z|<1$.

Write $F= f\vee 1$, then $|f/F(w)|\le 1$ for all $w\in \D$ and $f\wedge f^2= f^2/F$. Then for $w\in \D$
$$|(f\wedge f^2)'(w)|^2=\left|2\frac{f(w)}{F(w)}f'(w)-  \frac{f(w)^2}{F(w)^2} F'(w)\right|^2 \le 8|f'(w)|^2+ 2|F'(w)|^2.$$
By Lemma \ref{lem:localDiriPotential} with $\mu$ the unit point mass at $z$ this implies that
$$D_z(f\wedge f^2)\le 8 D_z(f) + 2 D_z(F) \le 10 D_z(f),$$ where the last inequality followed from Lemma \ref{lem:wedge_vee}.
\end{proof}

\begin{proof}[Proof of Theorem \ref{thm:ghalpha}] Let $h\in D(\mu)$ be an outer function such that $h^2\in D(\mu)$.
As in the proof of Lemma 9.1.5 of \cite{ElFallahKellayMashreghiRansfordPrimer} we apply the previous Lemma to the function $f=nh$ and divide by $n^2$ to obtain \begin{align}\label{equ:Dz(nh^2)}D_z(h\wedge n h^2)\le 10 D_z(h)\end{align} for each $n\in \N$ and each outer function $h\in H^2$. This implies that $h\wedge n h^2 \to h$ weakly in $D(\mu)$. Since $|h\wedge nh^2|\le n|h^2|$ we conclude from Corollary IV.4.4 of \cite{AlexandruHab} that $h\wedge nh^2\in [h]$ for each $n\in \N$, and hence $h\in [h^2]$.
\end{proof}

\

\begin{proof}[Proof of Theorem \ref{thm:log}] We know from equation (\ref{equ:Smirnovratio}) that $f=u/(1-v)$ for contractive multipliers $u,v$. By Lemma 3.2 of \cite{APRSS2} and the hypothesis it follows that $\log u\in N^+(D(\mu))$ and  we know that $u$ is cyclic if and only if $f$ is cyclic (see Lemma 6.3 of \cite{APRSS1}). Thus we will now assume that $f=u$ is a contractive multiplier.

The hypothesis implies that $f=e^{\varphi/\psi}$ for multipliers $\varphi, \psi$, where $\psi$ is cyclic.
For $n\in \N$ consider $g_n=\psi^2 f^{1/n}$. Since $\psi$ is cyclic, it must be outer, and that implies that each $g_n$ is outer.

Then $g_{2n}^2=\psi^2 g_n\in [g_n]$. By Theorem \ref{thm:ghalpha} we have $g_{2n}\in [g_{2n}^2]$. Thus, we obtain $g_{2n}\in [g_n]$ for each $n$, and that implies $g_{2^n}\in [g_1]\subseteq [f]$.

We will show that $g_{n}\to \psi^2$ in $D(\mu)$ as $n \to \infty$. Then it follows that $\psi^2 \in [f]$ and the cyclicity of $\psi$ implies $1\in [f]$, i.e. $f$ is cyclic.

We know $\|f\|_\infty \le 1$, and we fix $c>0$ such that
$\|\varphi\|_\infty, \|\psi\|_\infty \le c$. Then
$g_n'= 2\psi\psi'f^{1/n} +\frac{1}{n} f^{1/n}(\varphi'\psi-\psi'\varphi)$ and hence $$|g'_n-(\psi^2)'|\le 2c|\psi'||f^{1/n}-1| +\frac{c}{n}(|\varphi'|+|\psi'|).$$ Thus  $g_n \in D(\mu)$ for each $n\in \N$, and  by the Dominated Convergence Theorem

\begin{align*}&\int_\D|g_n'-(\psi^2)'|^2U_\mu dA\to 0 \ \ \text{ as } n \to \infty.
\end{align*}
It is clear that $\psi^2 f^{1/n}\to \psi^2$ pointwise in $\D$ as $n\to \infty$, hence $g_n\to \psi^2$ in the $D(\mu)$-norm.
\end{proof}
\section{Iterations and the proof of Corollary \ref{cor:N+Dmu}}\label{sec:iterations}
In the following lemma   $\log$ denotes the principal branch.
\begin{lem} \label{lem:RePositiv} Let $F\in \Hol(\D)$ such that $\mathrm{Re} F(z)\ge 0$.

If  $\log(1+ F)\in N^+(D(\mu))$, then $F \in N^+(D(\mu))$.
\end{lem}
For radially weighted Besov spaces this was proved in \cite{APRSS2}, Lemma 5.2 (a). The same proof works for $D(\mu)$. Since the $D(\mu)$-spaces are defined with just one derivative, the proof is very short and we repeat it here.

\begin{proof}  Suppose that $\log(1+F) \in N^+(D(\mu))$, then $\log(1+F)= \varphi/\psi$ for some multipliers $\varphi, \psi$, where $\psi$ is cyclic. Consider the function $h= \psi^2 e^{-\varphi/\psi}= \frac{\psi^2}{1+F}$. Then $h' = \frac{2\psi \psi'-\varphi'\psi+\varphi\psi'}{1+F}$ and hence $h \in D(\mu)$ since $|1+F|\ge 1$.

As in the proof of Theorem \ref{thm:log} we will now show that $h$ is cyclic in $D(\mu)$. For $n\in \N$ set $g_n=  \psi^2 e^{-\frac{1}{n} \varphi/\psi}$. Then one checks that $g_n \in D(\mu)$ and $g_n \to \psi^2$  as $n\to \infty$. Since $\frac{1}{1+F}=e^{-\varphi/\psi}$ is an outer function, we can apply Theorem \ref{thm:ghalpha} to conclude that $g_{2n}\in [g_{2n}^2]\subseteq [g_n]$ for each $n$. Thus, $g_{2^n} \in [h]$ for all $n\in \N$. Hence $\psi^2\in [h]$ and the cyclicity of $\psi^2$ implies that $h$ must be cyclic in $D(\mu)$.

 Thus, $h=\frac{u}{v}$ for two cyclic multipliers $u$ and $v$. It follows that $F=\frac{\psi^2 v-u}{u}\in N^+(D(\mu))$.
\end{proof}

It is now clear that a repeated application of this Lemma together with Theorem \ref{thm:log} imply Corollary \ref{cor:N+Dmu}.

\section{A sufficient condition for the converse of Corollary \ref{cor:N+Dmu}}
We start with  a Lemma.
\begin{lem}\label{lem:G_k_k>n} Let $j,n\in \N$. If $g\in D(\mu)\cap H^\infty$ such that $g^j (G_n \circ \log \frac{1}{g})\in D(\mu)$, then $g^j (G_k \circ \log \frac{1}{g})\in D(\mu)$ for all $k \ge n$.
\end{lem}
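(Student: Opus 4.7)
The plan is to induct on $k$, with the base case $k=n$ given by hypothesis. Set $h=\log(1/g)$ (well defined with $\mathrm{Re}\,h\ge 0$, since $g$ is zero-free and $\|g\|_\infty\le 1$) and $F_k=G_k\circ h$, so that each $F_k$ maps $\D$ into the right half-plane $\C_+$. Membership of $g^j F_{k+1}$ in $H^2$ is free: the remarks following Corollary \ref{cor:Dmu} give $F_{k+1}\in H^p$ for every $p<\infty$, and $g\in H^\infty$. The inductive step therefore reduces to showing $(g^j F_{k+1})'\in L^2(P_\mu dA)$, given $(g^j F_k)'\in L^2(P_\mu dA)$.

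The main computation I would do first is the identity
\begin{equation*}
(g^j F_{k+1})'=\frac{(g^j F_k)'}{1+F_k}+j g^{j-1}g'\,\Phi_k,\qquad \Phi_k:=F_{k+1}-\frac{F_k}{1+F_k},
\end{equation*}
which follows from $F_{k+1}'=F_k'/(1+F_k)$ and the product rule on both sides. Since $\mathrm{Re}\,F_k\ge 0$ on $\D$ we have $|1+F_k|\ge 1$, so the first summand is pointwise dominated by $|(g^j F_k)'|$ and hence lies in $L^2(P_\mu dA)$ by the inductive hypothesis. Thus the step reduces to proving $g^{j-1}g'\Phi_k\in L^2(P_\mu dA)$.

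To control $\Phi_k$ I would use the standard estimate $|\log(1+u)|\le\log(1+|u|)+\pi/2$ on $\overline{\C_+}$ together with $|u/(1+u)|\le 1$, which give $|\Phi_k|\le\log(1+|F_k|)+C$. Combined with the elementary inequality $(\log(1+x))^2\le 2x$ for $x\ge 0$, this yields $|\Phi_k|^2\le C(1+|F_k|)$ uniformly on $\D$. The integral $\int |g^{j-1}g'|^2 P_\mu dA$ is finite because $|g|\le 1$ and $g\in D(\mu)$, so the remaining task is to show that $\int |g^{j-1}g'|^2|F_k|\,P_\mu dA<\infty$.

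Here Cauchy--Schwarz against the measure $P_\mu dA$ gives
\begin{equation*}
\int |g^{j-1}g'|^2|F_k|\,P_\mu dA\le\Bigl(\int|g^{j-1}g'|^2 P_\mu dA\Bigr)^{1/2}\Bigl(\int |g^{j-1}g'F_k|^2 P_\mu dA\Bigr)^{1/2},
\end{equation*}
so it suffices to control $g^{j-1}g'F_k$ in $L^2(P_\mu dA)$. For this I would write $j g^{j-1}g'F_k=(g^j F_k)'-g^j F_k'$, noting that the first piece is in $L^2(P_\mu dA)$ by the inductive hypothesis, and for the second that $F_k'=-G_k'(h)g'/g$, with $|G_k'|\le 1$ on $\C_+$ by a routine induction from the recursion $G_{k+1}'=G_k'/(1+G_k)$ and $|1+G_k|\ge 1$. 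Consequently $|g^j F_k'|\le|g|^{j-1}|g'|\le|g'|\in L^2(P_\mu dA)$, closing the induction. The main obstacle is spotting the right decomposition of $(g^j F_{k+1})'$ and recognizing that the residual $\Phi_k$ grows only sublinearly in $|F_k|$, so that the inductive hypothesis supplies, via Cauchy--Schwarz, exactly the control needed.
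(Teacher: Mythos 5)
Your proof is correct, and it rests on the same two elementary facts as the paper's: $|G_k'|\le 1$ on $\C_+$ (so that $g^j(G_k\circ\log\frac1g)'=-g^{j-1}g'\,G_k'(\log\frac1g)$ is automatically in $L^2(P_\mu dA)$), and the sublinear growth of $G_{k+1}$ relative to $G_k$. The packaging, however, is different and noticeably more roundabout. The paper uses the first fact once to reduce the whole problem to the single statement ``$g^j(G_k\circ\log\frac1g)\in D(\mu)$ if and only if $g^{j-1}g'(G_k\circ\log\frac1g)\in L^2(P_\mu dA)$,'' and then finishes in one line with the pointwise bound $|G_{n+1}(w)|\le\log(1+|G_n(w)|)+\pi/2\le |G_n(w)|+\pi/2$, which transfers $L^2$-membership from level $n$ to level $n+1$ directly. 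You instead decompose $(g^jF_{k+1})'$ via the identity involving $\Phi_k=F_{k+1}-F_k/(1+F_k)$, estimate $|\Phi_k|^2\lesssim 1+|F_k|$ using $(\log(1+x))^2\le 2x$, and then apply Cauchy--Schwarz to reduce to $g^{j-1}g'F_k\in L^2(P_\mu dA)$ --- which you then prove by exactly the paper's reduction. In other words, your final paragraph already contains the whole argument: once you know $g^{j-1}g'F_k\in L^2(P_\mu dA)$ and that $g^jF_{k+1}\in D(\mu)$ is equivalent to $g^{j-1}g'F_{k+1}\in L^2(P_\mu dA)$, the crude linear bound $|F_{k+1}|\le|F_k|+\pi/2$ closes the induction without any need for $\Phi_k$, the sublinear estimate, or Cauchy--Schwarz. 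So nothing is wrong, but the detour buys you nothing; the sharper sublinear control of $\Phi_k$ is simply not needed here. (Both your argument and the paper's implicitly use that $g$ is zero-free with $\|g\|_\infty\le 1$ so that $\mathrm{Re}\log\frac1g\ge 0$; this is not in the lemma's literal hypotheses but is forced by the context in which it is applied.)
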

\begin{proof} We show the statement for $k=n+1$, then the general case follows by induction.
First we note that $|G_{n+1}(z)|\le \log (1+|G_n(z)|) +\pi/2 \le |G_n(z)|+\pi/2$. Hence the hypothesis easily shows that $g^j (G_{n+1} \circ \log \frac{1}{g})\in H^2$. For later reference we also observe that $g\in D(\mu)\cap H^\infty$ implies that  $g^{j-1}g'\in L^2(U_\mu dA)$.

Furthermore, one checks that $|G_k'(w)|\le 1$ for all $w\in \C_+$ and all $k\in \N$. Hence $|g^j(z) (G_k(\log \frac{1}{g(z)}))'| = |g(z)|^{j-1} |g'(z)||G_k'(\log \frac{1}{g(z)})|\in L^2(U_\mu dA)$ for $k=n$ and $k=n+1$. This implies that $g^j (G_k\circ \log\frac{1}{g})\in D(\mu)$, if and only if $g^{j-1}g' (G_k\circ \log\frac{1}{g})\in L^2(U_\mu dA)$. But by the above estimate we have
$$|g^{j-1}(z)g'(z) (G_{n+1}\circ \log\frac{1}{g(z)})|\le |g^{j-1}(z)g'(z) (G_{n}\circ \log\frac{1}{g(z)})| + \frac{\pi}{2} |g^{j-1}(z)g'(z)|.$$ The Lemma follows.
\end{proof}
In the next section we will prove that the following Theorem implies Theorem \ref{thm:equivalences}.
\begin{thm}\label{thm:equivalences1}
Let $g\in D(\mu) \cap H^\infty(\D)$ be an outer function with $\|g\|_\infty\le 1$ and such that $g^j (G_n\circ\log \frac{1}{g})\in D(\mu)$ for some $j,n\in \N$.

 Then the following are equivalent
\begin{enumerate}
\item $g $ is cyclic in $D(\mu)$,
\item $ \log g\in N^+(D(\mu))$,
\item $\log(1+\log \frac{1}{g}) \in N^+(D(\mu))$,
\item there is $k \in \N$ such that $G_k\circ \log \frac{1}{g} \in N^+(D(\mu))$,
\item for all $k \in \N$ we have $G_k\circ \log \frac{1}{g} \in N^+(D(\mu))$.
\end{enumerate}
\end{thm}
\begin{proof} It is clear that $(5)\Rightarrow (4)$. The implications $(4)\Rightarrow (3)\Rightarrow (2)$  follow from Lemma \ref{lem:RePositiv}, and $(2)\Rightarrow (1)$ is Theorem \ref{thm:log}.

We next show $(1) \Rightarrow (4)$ with $k=n$. Then Lemma \ref{lem:G_k_k>n} will imply  that $G_k\circ \log \frac{1}{g} \in N^+(D(\mu))$ for every $k \ge n$.

We have $g=u/v$ and $g^j (G_n \circ \log \frac{1}{g})= \varphi/\psi$
for multipliers $u, v, \varphi$, and $\psi$, where $v$ and $ \psi$ are cyclic. But then $[g]=[u]$ by Lemma 6.3 of \cite{APRSS1}. Thus, the cyclicity of $g$ implies that $u$ is cyclic, and this implies that
$G_n \circ \log \frac{1}{g}= \frac{\varphi v^j}{\psi u^j}\in N^+(D(\mu))$, since $\psi u^j$ is cyclic. Here we used the easy observation that a product of finitely many multipliers is cyclic, if and only if each factor is cyclic.

Finally, by what we have just shown in order to establish $(1)\Rightarrow (5)$ we only have to note that $G_n\circ \log \frac{1}{g} \in N^+(D(\mu))$  implies
 $G_k\circ \log \frac{1}{g} \in N^+(D(\mu))$ for all $k\in \N$ with $k\le n$ by Lemma \ref{lem:RePositiv}.\end{proof}

\section{Establishing the hypothesis of Theorem \ref{thm:equivalences1}} \label{sec:thmgrowth}
In \cite{APRSS2}  Theorem \ref{thm:equivalences} was proved for radial weights under the hypothesis that $g$ has bounded argument. In fact, if $g$ has bounded argument and $\|g\|_\infty \le 1$, then $g (G_n \circ \log \frac{1}{g}) $ is bounded for each $n$, and  it easily follows that $g^2 (G_n \circ \log \frac{1}{g})\in D(\mu)$ for each $n\in \N$. Thus, in this case Theorem \ref{thm:equivalences} follows from Theorem \ref{thm:equivalences1} with $j=2$.

The remaining case of Theorem \ref{thm:equivalences} follows from Theorem \ref{thm:equivalences1} with $j=1$ as the following Theorem shows.

\begin{thm}\label{thm:growthcond} Let $g\in D(\mu) \cap H^\infty(\D)$ be a function such that $g(z)\ne 0$ for all $z\in \D$ and $\|g\|_\infty\le 1$, and let $n\in \N$.

If $\int_{\D}|g'(z)|^2 |G_n(\frac{1}{1-|z|^2} )|^2U_\mu(z)dA(z) <\infty$, then $g (G_n \circ \log \frac{1}{g})\in D(\mu)$.
\end{thm}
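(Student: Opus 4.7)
The plan is to show $f:=g\cdot(G_n\circ\log(1/g))\in D(\mu)$ by obtaining a pointwise bound on $f'$, after which Theorem~\ref{thm:equivalences} applies with $j=1$.

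First I would set $\phi:=\log(1/g)$, analytic on $\D$ with $\mathrm{Re}\,\phi\ge 0$, and dispose of the trivial case $g\equiv 1$ (where $f\equiv 0$); otherwise $\phi(0)\ne 0$. I would then observe that $f\in H^\infty\subseteq H^2$ automatically: since $\|g\|_\infty\le 1$ and $|w|\,|G_n(\log(1/w))|\to 0$ as $w\to 0$ for $n\ge 1$ (iterated-logarithmic growth of $G_n$ is beaten by the factor $|w|$), $f$ is bounded. Therefore the only nontrivial task is $\int_\D|f'|^2 P_\mu\,dA<\infty$. Using $\phi'=-g'/g$, the chain rule gives
\[ f'=g'\bigl(G_n(\phi)-G_n'(\phi)\bigr), \]
and the inequality $|G_n'|\le 1$ on $\C_+$ (established in the preceding lemma) yields $|f'|^2\le 2|g'|^2(|G_n(\phi)|^2+1)$.

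The crucial ingredient is the pointwise bound $|G_n(\phi(z))|\le G_n(1/(1-|z|^2))+C$ with $C=C(n,\phi(0))$. I would derive this in three inductive steps: (i) the Schwarz-Pick inequality for positive-real-part analytic functions gives $|\phi(z)|\le\frac{1+|z|}{1-|z|}|\phi(0)|\le 2|\phi(0)|/(1-|z|^2)$; (ii) the estimate $|\log(1+w)|\le\log(1+|w|)+\pi/2$ valid on $\overline{\C_+}$, combined with $G_{n+1}=\log(1+G_n)$ and induction on $n$, gives $|G_n(w)|\le G_n(|w|)+c_n$ for $w\in\overline{\C_+}$; (iii) the slow-growth estimate $G_n(aR)\le G_n(R)+c_n'(a)$ for $R,a>0$, again by induction using $\log(a(1+R))=\log a+\log(1+R)$. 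Concatenating the three bounds proves the claim.

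Plugging back, $|f'|^2\le C_n|g'|^2(G_n(1/(1-|z|^2))^2+1)$, so
\[ \int_\D|f'|^2 P_\mu\,dA\le C_n\int_\D|g'|^2 G_n(1/(1-|z|^2))^2 P_\mu\,dA+C_n'\|g\|_\mu^2. \]
The second term is finite because $g\in D(\mu)$. The main obstacle is the first term: the hypothesis bounds the integrand only against $dA$, whereas we need it against $P_\mu\,dA$. My plan for bridging this is to write $P_\mu(z)=\frac{1}{2\pi}\int_\T\frac{1-|z|^2}{|1-z\bar\zeta|^2}d\mu(\zeta)$ and use Fubini, reducing matters to a uniform-in-$\zeta$ bound on the local integral $\int_\D|g'|^2 G_n(1/(1-|z|^2))^2(1-|z|^2)|1-z\bar\zeta|^{-2}dA$; the slow growth of $G_n$ together with a Carleson-box decomposition of $\D$ around $\zeta$ should let the hypothesis control this local quantity modulo a factor depending on $\mu(\T)$. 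Once $f\in D(\mu)$ is established, the hypotheses of Theorem~\ref{thm:equivalences} are satisfied with $j=1$, which yields the final assertion of the theorem.
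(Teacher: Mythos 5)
Your two main estimates are exactly the ones the paper uses: the derivative bound $|f'|\le |g'|\,\bigl(|G_n(\log\frac{1}{g})|+1\bigr)$ coming from $|G_n'|\le 1$ on the right half-plane, and the pointwise bound $|G_n(\log\frac{1}{g(z)})|\le G_n(\frac{1}{1-|z|^2})+C$. The latter is precisely Lemma~\ref{lem:nbounded}, which the paper proves by the same scheme you outline (a Herglotz/Harnack bound $|\log\frac{1}{g(z)}|\lesssim (1-|z|^2)^{-1}$, then induction on $n$ using $|\log(1+w)|\le \log(1+|w|)+\pi/2$ and the slow growth of $G_n$); the paper ends up with a multiplicative constant $M_n$ in front of $G_n(\frac{1}{1-|z|^2})$ rather than your additive one, and either form suffices. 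Up to this point your argument is correct and coincides with the paper's. (Your $H^2$ remark is also fine: $f\in H^\infty$ for $n\ge 1$, and in any case $|f|\le |G_n\circ\log\frac{1}{g}|\in L^2(\T)$.)

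The unresolved step is the one you flag yourself: passing from the hypothesis, an integral against $dA$, to the needed integral $\int_\D |g'|^2\,G_n(\frac{1}{1-|z|^2})^2\,P_\mu\,dA<\infty$. For comparison, the paper does not carry out this step explicitly either; it deduces the theorem directly from Lemma~\ref{lem:nbounded}, in effect reading the hypothesis as controlling the $P_\mu\,dA$-integral (which is literally the case for the classical Dirichlet space, where $P_\mu\equiv 1$). Your proposed bridge, however, cannot work as described: a Fubini/Carleson-box argument producing a bound ``modulo a factor depending on $\mu(\T)$'' would amount to the implication that $\int_\D W\,dA<\infty$ forces $\int_\D W\,P_\mu\,dA\le C(\mu(\T))\int_\D W\,dA$ for nonnegative $W$, which is equivalent to $P_\mu\in L^\infty(\D)$ and already fails for $\mu=\delta_1$, where $P_\mu(z)\asymp (1-|z|^2)/|1-z|^2$ is unbounded. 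So either the hypothesis should be taken in the weighted form $\int_\D |g'|^2\,|G_n(\frac{1}{1-|z|^2})|^2\,P_\mu\,dA<\infty$ --- under which your proof (and the paper's) closes immediately --- or a genuinely new idea exploiting the specific structure of $|g'|^2$ and the fact that $g\in D(\mu)$ is required; the sketch as written leaves a gap at exactly this point.
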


The proof requires a little bit of set-up and a lemma.
One checks that
\begin{align}\label{equ:RGn}|G_n'(z)| \le 1, \ \mathrm{Re }z>0, \ n\in \N_0.\end{align}

Next, for $n\in \N_0$ define functions $F_n$ on $[0,1)$ by $F_n(r)=G_n(\frac{1}{1-r})$. Then each $F_n$ is continuous and increases to $\infty$ as $r\to 1^-$, and the functions satisfy $F_n(0)>0$ and $F_{n+1}(r)\le F_n(r)$ for all $n\in \N_0$ and $r\in [0,1)$.  If $M_0>0$ is given, then for each $n\ge 1$  we have
 $M_n= \sup_{x\in [F_n(0),\infty)}\frac{\log(1+\pi/2+ M_{n-1} x)}{\log (1+x)}<\infty$ and hence  \begin{align}\label{equ:F_est}\log(1+\pi/2 + M_{n-1}F_{n-1}(r)) \le M_n F_n(r)\end{align} for all $r\in [0,1)$.

\begin{lem}\label{lem:nbounded}  If $f\in \Hol(\D)$ with $\mathrm{Re} f(z)> 0$,
then  for each $n\in \N_0$ we have $$|G_n(f(z))|\le \frac{\pi}{2} + M_n F_n(|z|^2)$$ for all $z\in \D$. Here $\{M_n\}$ is defined as above with $M_0=5|f(0)|$. \end{lem}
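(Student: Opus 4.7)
The plan is to prove the bound by induction on $n$, with the $G_n(\C_+)\subseteq \overline{\C_+}$ property and the recursion $F_{n+1}(r)=\log(1+F_n(r))$ doing essentially all the work once the base case is in hand.

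For the base case $n=0$, I would use the standard Schwarz--Pick estimate for functions with positive real part. Since $\Re f>0$ and $a:=f(0)>0$, the M\"obius transformation $w\mapsto (w-a)/(w+a)$ maps $\C_+$ onto $\D$ and sends $a$ to $0$, so composing with $f$ and applying Schwarz's lemma gives the Harnack-type bound
$$|f(z)|\le a\,\frac{1+|z|}{1-|z|}.$$
Combining with $1-|z|\ge (1-|z|^2)/2$ yields $|f(z)|\le 4a/(1-|z|^2)=M_0 F_0(|z|^2)$, which is stronger than the required inequality for $n=0$ (the $\pi/2$ is not needed here).

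For the inductive step, I would first verify that $G_n(\C_+)\subseteq \overline{\C_+}$ for every $n$: this is immediate for $n=0$, and if $\Re G_n(w)\ge 0$ then $|1+G_n(w)|\ge 1$, so $\Re G_{n+1}(w)=\log|1+G_n(w)|\ge 0$. Consequently $|1+G_n(f(z))|\ge 1$, and splitting $G_{n+1}(f(z))=\log|1+G_n(f(z))|+i\arg(1+G_n(f(z)))$ with $|\arg|\le \pi/2$ gives
$$|G_{n+1}(f(z))|\le \log|1+G_n(f(z))|+\tfrac{\pi}{2}\le \log(1+|G_n(f(z))|)+\tfrac{\pi}{2}.$$
Inserting the inductive hypothesis $|G_n(f(z))|\le \pi/2+M_nF_n(|z|^2)$ and using monotonicity of $\log$ gives
$$|G_{n+1}(f(z))|\le \log\bigl(1+\tfrac{\pi}{2}+M_n F_n(|z|^2)\bigr)+\tfrac{\pi}{2}.$$
The estimate (\ref{equ:F_est}), applied with $n$ replaced by $n+1$ (so at the point $x=F_n(|z|^2)\in [F_{n+1}(0),\infty)$), bounds the right-hand side by $\pi/2+M_{n+1}F_{n+1}(|z|^2)$, completing the induction.

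The only real obstacle is bookkeeping: one has to check that the argument $x=F_n(|z|^2)$ lies in the interval $[F_{n+1}(0),\infty)$ on which the sup defining $M_{n+1}$ is taken, but this is automatic from monotonicity of $F_n$ in $r$ combined with $F_{n+1}\le F_n$. Everything else reduces to the two elementary inputs above: the Schwarz--Pick bound for positive-real-part functions, and the fact that $\log(1+\cdot)$ preserves $\overline{\C_+}$.
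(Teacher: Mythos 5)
Your proof is correct and takes essentially the same route as the paper: induction on $n$, the bound $|G_{n+1}(f(z))|\le \tfrac{\pi}{2}+\log(1+|G_n(f(z))|)$ coming from $\mathrm{Re}\,G_n\ge 0$ (so $|1+G_n(f(z))|\ge 1$), and inequality (\ref{equ:F_est}) to close the induction. The only cosmetic difference is the base case, where you obtain $|f(z)|\le 4f(0)/(1-|z|^2)$ via the Schwarz lemma for $(f-a)/(f+a)$ rather than via the Herglotz representation as the paper does; both yield the identical estimate $M_0F_0(|z|^2)$.
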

\begin{proof} We will prove the Lemma by induction on $n$ with the sequence $M_n$ as above defined by $M_0=5| f(0)|$.
Since $f$ has positive real part, there is a positive measure $\nu$ on $[0,2\pi]$ such that
$$f(z)= \int_0^{2\pi} \frac{e^{it}+z}{e^{it}-z}d\nu(t) + i \mathrm{Im}f(0).$$
Then $$|f(z)|\le \frac{4 \mathrm{Re } f(0)}{1-|z|^2} + |\mathrm{Im}f(0)| \le \frac{5 |f(0)|}{1-|z|^2}.$$

Thus, we obtain
$$|G_0(f(z))|=|f(z)|\le M_0 F_0(|z|^2) \le \frac{\pi}{2} + M_0 F_0(|z|^2).$$

Now  assume that the Lemma holds for $n\ge 0$. Then
\begin{align*}|G_{n+1}(f(z))|&= |\log ( 1 + G_n(f(z)))|\\
&\le \frac{\pi}{2} + \log (1+|G_n(f(z))|)\\
&\le  \frac{\pi}{2} + \log(1+\pi/2+M_nF_n(|z|^2)) \ \text{ by induction hypothesis}\\
&\le \frac{\pi}{2} + M_{n+1}F_{n+1}(|z|^2) \ \text{ by }(\ref{equ:F_est}).\end{align*}
The Lemma follows.
\end{proof}

\begin{proof}[Proof of Theorem \ref{thm:growthcond}] Let $g\in D(\mu) \cap H^\infty(\D)$  such that $g(z)\ne 0$ for all $z\in \D$, $\|g\|_\infty\le 1$,
and \begin{align}\label{equ:hyp}\int_{\D}|g'(z)|^2 |F_n(|z|^2 )|^2U_\mu(z)dA(z) <\infty\end{align} for some $n\in \N$.
We have to show that  $h=g G_n(\log \frac{1}{g})\in D(\mu)$.

 By inequality  (\ref{equ:RGn}) we have $$|h'(z)|\le  |g'(z)G_n(\log \frac{1}{g(z)})| + |g(z) G_n'(\log \frac{1}{g(z)})\frac{g'(z)}{g(z)}| \le |g'(z)|(|G_n(\log \frac{1}{g(z)})|+1), $$ hence an application of  Lemma \ref{lem:nbounded} with $f=\log \frac{1}{g}$ implies that $$|h'(z)| \le |g'(z)|( 1+\frac{\pi}{2} + M_n F_n(|z|^2)).$$ Thus, the Theorem follows from the hypothesis \ref{equ:hyp}
\end{proof}

\section{The proof of Theorem \ref{thm:Nevanlinna}}\label{sec:NevanlinnaClass}

\begin{proof}[Proof of Theorem \ref{thm:Nevanlinna}]  Since $D(\mu)$ is a Pick space we have $g= \frac{u}{1-v}$ for two contractive multipliers $u$ and $v$. By Lemma 3.2 of \cite{APRSS2} we have $\log({1-v})\in N^+(D(\mu))$, hence we may assume that $g$ is a multiplier, $g(z)\ne 0$ for all $z\in \D$, and $\|g\|_\infty \le 1$.

Thus, let $n\in \N \cup\{0\}$ and set $f= \frac{g}{1+G_n(\log \frac{1}{g})}$. We will show that $f\in D(\mu)$. Then $f=\frac{\varphi}{\psi}$ for multipliers $\varphi, \psi$, where $\psi $ is cyclic. Furthermore, it is clear that $\varphi(z)\ne 0$ for all $z\in \D$. Hence we will have $$G_n(\log \frac{1}{g})= \frac{g\psi}{\varphi}-1\in N(D(\mu)).$$

 We have
$$|f'(z)| =\left|\frac{g'(z)}{1+G_n(\log \frac{1}{g(z)})} +\frac{g'(z)}{(1+G_n(\log \frac{1}{g(z)}))^2}G_n'(\log \frac{1}{g(z)})\right|\le 2|g'(z)|, $$ where we used that $|1+G_n(w)|\ge 1$ and $|G_n'(w)|\le 1$ for all $w\in \C$ with $\mathrm{Re}w>0$. These inequalities can easily be verified by induction. Hence $f\in D(\mu)$.
\end{proof}

\bibliography{CyclicBib2}

\begin{thebibliography}{10}

\bibitem{AlexandruProc1992}
Alexandru Aleman.
\newblock Hilbert spaces of analytic functions between the {H}ardy and the
  {D}irichlet space.
\newblock {\em Proc. Amer. Math. Soc.}, 115(1):97--104, 1992.

\bibitem{AlexandruHab}
Alexandru Aleman.
\newblock {\em The multiplication operator on Hilbert spaces of analytic
  functions}.
\newblock Habilitationsschrift. Fernuniversit\"{a}t Hagen, 1993.

\bibitem{AHMRsccps}
Alexandru Aleman, Michael Hartz, John~E. McCarthy, and Stefan Richter.
\newblock The {S}mirnov class for spaces with the complete {P}ick property.
\newblock {\em J. Lond. Math. Soc. (2)}, 96(1):228--242, 2017.

\bibitem{APRSS2}
Alexandru Aleman, Karl-Mikael Perfekt, Stefan Richter, Carl Sundberg, and James
  Sunkes.
\newblock Cyclicity and iterated logarithms in the {D}rury-{A}rveson space.
\newblock {\em to appear, see arXiv:2301.10091}.

\bibitem{APRSS1}
Alexandru Aleman, Karl-Mikael Perfekt, Stefan Richter, Carl Sundberg, and James
  Sunkes.
\newblock Cyclicity in the {D}rury-{A}rveson space and other weighted {B}esov
  spaces.
\newblock {\em Trans. Amer. Math. Soc.}, 377(2):1273--1298, 2024.

\bibitem{BrownShields}
Leon Brown and Allen~L. Shields.
\newblock Cyclic vectors in the {D}irichlet space.
\newblock {\em Trans. Amer. Math. Soc.}, 285(1):269--303, 1984.

\bibitem{ElFallahKellayMashreghiRansfordPrimer}
Omar El-Fallah, Karim Kellay, Javad Mashreghi, and Thomas Ransford.
\newblock {\em A primer on the {D}irichlet space}, volume 203 of {\em Cambridge
  Tracts in Mathematics}.
\newblock Cambridge University Press, Cambridge, 2014.

\bibitem{Garnett}
John~B. Garnett.
\newblock {\em Bounded analytic functions}, volume 236 of {\em Graduate Texts
  in Mathematics}.
\newblock Springer, New York, first edition, 2007.

\bibitem{RiSuMMJ}
Stefan Richter and Carl Sundberg.
\newblock A formula for the local {D}irichlet integral.
\newblock {\em Michigan Math. J.}, 38(3):355--379, 1991.

\bibitem{RiSuJOT}
Stefan Richter and Carl Sundberg.
\newblock Multipliers and invariant subspaces in the {D}irichlet space.
\newblock {\em J. Operator Theory}, 28(1):167--186, 1992.

\bibitem{ShimorinCNP}
Serguei Shimorin.
\newblock Complete {N}evanlinna-{P}ick property of {D}irichlet-type spaces.
\newblock {\em J. Funct. Anal.}, 191(2):276--296, 2002.

\end{thebibliography}
\end{document}